\newtheorem{thm}{Theorem}[section]
\newtheorem{cor}[thm]{Corollary}
\newtheorem{lem}[thm]{Lemma}
\newtheorem{prop}[thm]{Proposition}
\theoremstyle{definition}
\newtheorem{defn}[thm]{Definition}
\theoremstyle{remark}
\newtheorem{rem}[thm]{Remark}
\newtheorem{ex}[thm]{Example}
\newfont{\eufm}{eufm10}
\renewcommand{\L}{{\mathcal L}}
\renewcommand{\d }{{\rm d} }
\newcommand{\e }{\varepsilon }
\renewcommand{\kappa }{\varkappa}
\newcommand{\diam}{{\rm diam\,}}
\newcommand{\SG}{{\mathcal S} (G)}
\newcommand{\ldot}{.}
\begin{document}

\title{Invariant random subgroups of groups acting \\ on hyperbolic spaces}
\author{D. Osin\thanks{This work was supported by the NSF grant DMS-1308961.}}
\date{}
\maketitle
\vspace*{-8mm}
\begin{abstract}
Suppose that a group $G$ acts non-elementarily on a hyperbolic space $S$ and does not fix any point of $\partial S$. A subgroup $H\le G$ is \emph{geometrically dense in $G$} if the limit sets of $H$ and $G$ coincide and $H$ does not fix any point of $\partial S$. We prove that every invariant random subgroup of $G$ is either geometrically dense or contained in the elliptic radical (i.e., the maximal normal elliptic subgroup of $G$). In particular, every ergodic measure preserving action of an acylindrically hyperbolic group on a Borel probability space $(X,\mu)$ either has finite stabilizers $\mu$-almost surely or otherwise the stabilizers are acylindrically hyperbolic $\mu$-almost surely.
\end{abstract}

\section{Introduction}
To every discrete group $G$, one can associate a topological dynamical system as follows. Let $\SG$ denote the space of all subgroups of $G$ with topology inherited from the Tichonoff product topology of $\{0,1\}^G$. Then $\SG$ is Hausdorff and compact; if $G$ is countable, $\SG$ is second countable and hence metrizable. The group $G$ acts on $\SG$ by conjugation, written $g.H=gHg^{-1}$ for $g\in G$ and $H\in \SG$. It is easy to check that conjugations are homeomorphisms of $\SG$.

\begin{defn}
An \emph{invariant random subgroup} of a group $G$ (abbreviated \emph{IRS}) is a Borel $G$-invariant probability measure on $\SG$.
\end{defn}

The term ``invariant random subgroup" was suggested in \cite{AGV}, but this notion has been studied for quite some time. For a brief history of the subject and a survey of some recent developments we refer to \cite{AGV,G}.

One can think of IRSs as generalizations of normal subgroups. Indeed every $N\lhd G$ gives rise to an atomic IRS, the Dirac measure concentrated at $N$. The study of IRSs is motivated by the following.

\begin{ex}\label{ex}
Let $G$ be a group acting by probability measure preserving (abbreviated p.m.p.) transformations on a Borel space $(X, \nu)$. For a point $x\in X$, we denote by $G_x$ the stabilizer of $x$. Auslander and Moore \cite{AM} proved that the map $f\colon X\to \SG$ given by $x\to G_x$ is measurable. Thus for every Borel subset $A\subseteq \SG$ we can define
\begin{equation}\label{mn}
\mu (A) =\nu (f^{-1}(A)).
\end{equation}
It is clear that $\mu $ is a probability measure on $\SG$ and is $G$-invariant since $gG_xg^{-1}=G_{gx}$ and the action of $G$ on $X$ preserves $\nu$. Thus $\mu $ is an IRS of $G$.
\end{ex}

Abert, Glasner and Virag \cite{AGV} proved that the this example is universal in the following sense: \emph{For every IRS $\mu $ of a countable group $G$, there exists a p.m.p. action of $G$ on a Borel space $(X, \nu)$ such that $\mu $ and $\nu$ are related by (\ref{mn})}. Thus studying IRSs of $G$ is the same as studying stabilizers of p.m.p. actions of $G$.

The main goal of this paper is to study IRSs of groups acting on hyperbolic spaces. We briefly recall necessary terminology here and refer to Section 3 for more details. All group actions on metric spaces considered in this paper are supposed to be isometric by default. Given an action of a group $G$ on a hyperbolic space $S$, we denote by $\Lambda (G)$ the limit set of $G$ on the Gromov boundary $\partial S$. The action of $G$ on $S$ is \emph{non-elementary} if $\Lambda (G)$ is infinite and \emph{of general type} if it is non-elementary and $G$ does not fix any point of $\partial S$. A subgroup $H\le G$ is called {\it elliptic} (with respect to the given action of $G$) if it has bounded orbits. The following result is likely known, but we could not find it in the literature.

\begin{prop}[Prop. \ref{EG}]\label{propeg}
Suppose that a group $G$ admits a non-elementary action on a hyperbolic space. Then there exists a maximal normal elliptic subgroup $E(G)$ of $G$ with respect to this action.
\end{prop}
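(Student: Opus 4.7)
The plan is to define $E(G)$ as the union of all normal elliptic subgroups of $G$, show it is a (normal) subgroup, and then show it is elliptic. Two ingredients do the work: a diameter estimate proving the family of normal elliptic subgroups is directed, and a dichotomy for normal subgroups of a non-elementary hyperbolic action.

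The first ingredient is a stability lemma: \emph{if $N_1, N_2 \lhd G$ are both elliptic, then so is $N_1 N_2$.} Normality is automatic; for ellipticity, fix $x \in S$, set $D_i = \sup_{n \in N_i} d(x, nx) < \infty$, and estimate
\[
d(x, n_1 n_2 x) \le d(x, n_1 x) + d(n_1 x, n_1 n_2 x) = d(x, n_1 x) + d(x, n_2 x) \le D_1 + D_2,
\]
using the isometric action of $n_1$. Iterating, any finite product of normal elliptic subgroups is normal elliptic, so the collection $\mathcal N$ of such subgroups is directed under inclusion, and $E(G) := \bigcup_{N \in \mathcal N} N$ is a normal subgroup of $G$ in which every finitely generated subgroup is elliptic.

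The core step is to show $E(G)$ itself is elliptic. Suppose not. Then the limit set $\Lambda(E(G)) \subseteq \partial S$ is nonempty and, by normality in $G$, $G$-invariant. The standard dichotomy for normal subgroups of a non-elementary hyperbolic action --- $\Lambda(N)$ is either contained in the (at most two) $G$-fixed points on $\partial S$, or contains all of $\Lambda(G)$ --- then forces $E(G)$ to contain a loxodromic element $g$ of $G$. But $g$ lies in some $N \in \mathcal N$, making $\langle g \rangle \le N$ an unbounded cyclic subgroup of an elliptic group, a contradiction. Hence $E(G)$ is elliptic, and by construction contains every normal elliptic subgroup, so it is the unique maximal such.

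I expect the main obstacle to be the dichotomy invoked in the last step, specifically ruling out that $\Lambda(E(G))$ is entirely contained in the set of $G$-fixed boundary points. In the general-type case (no $G$-fixed point on $\partial S$) this is automatic from minimality of the $G$-action on $\Lambda(G)$ together with density of loxodromic fixed points. In the focal case (a unique $G$-fixed point $p \in \partial S$) more care is required: one must use that every element of $E(G)$ is elliptic --- since each lies in some elliptic normal $N$ --- to preclude $E(G)$ being a parabolic subgroup at $p$, likely by pairing $E(G)$ against a loxodromic of $G$ that normalizes it and deriving an orbit contradiction from the displacement of $p$.
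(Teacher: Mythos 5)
Your construction of $E(G)$ as the directed union of normal elliptic subgroups is fine (the displacement estimate for $N_1N_2$ is correct), and the general-type half of your core step works, but the quasi-parabolic (focal) case is a genuine gap, not a detail: the statement is for all non-elementary actions, and you explicitly leave open the possibility that $E(G)$ is parabolic at the unique $G$-fixed point $p$, offering only the guess that one can derive ``an orbit contradiction'' by pairing $E(G)$ against a loxodromic that normalizes it. That mechanism cannot work as stated: a loxodromic element can perfectly well normalize an unbounded parabolic subgroup fixing its endpoint (e.g.\ $t$ normalizes $\mathbb Z[1/2]$ in $\mathbb Z[1/2]\rtimes\mathbb Z$ acting on its Bass--Serre tree, and similarly in the quasi-parabolic action of $\mathbb Z_2\wr\mathbb Z$ from Example \ref{ex1}), so no contradiction follows from that configuration alone. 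Any correct argument must exploit the specific structure of $E(G)$, namely that every element lies in a subgroup that is simultaneously \emph{normal} and \emph{elliptic}, and you have not shown how to do this. (Your appeal to the ``standard dichotomy'' for $\Lambda(N)$ is true in this generality but also needs an argument via north--south dynamics and Lemma \ref{dense}; it is not a citation-level fact for non-proper spaces.)

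The paper's proof of Proposition \ref{EG} both fills exactly this hole and bypasses your case analysis. It defines $E(G)$ directly as the pointwise stabilizer of $\Lambda(G)$, which is obviously normal, and proves two things: (i) if $N\lhd G$ is elliptic with $D=\diam(Ns)$, then for $a\in N$ and any sequence $g_is\to x\in\Lambda(G)$ one has $d(ag_is,g_is)=d(g_i^{-1}ag_is,s)\le D$ because $g_i^{-1}ag_i\in N$, so by Remark \ref{rem} $a$ fixes $x$; hence every normal elliptic subgroup fixes $\Lambda(G)$ pointwise and lies in $E(G)$; (ii) since $|\Lambda(G)|=\infty$ and, by the classification of actions, only elliptic subgroups can fix more than two points of $\partial S$, the pointwise stabilizer $E(G)$ is elliptic. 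Note that (i) applied to your directed union would also close your gap: the union fixes the infinite set $\Lambda(G)$ pointwise, hence cannot be parabolic at $p$, and this treats the general-type and quasi-parabolic cases uniformly, with no limit-set dichotomy needed. I recommend replacing your focal-case sketch by this displacement argument (or adopting the paper's definition of $E(G)$ outright).
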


Note that if the action of $G$ is elementary, such a maximal subgroup may not exist (see Example \ref{ex1}).  We call $E(G)$ the \emph{elliptic radical} of $G$. Further we say that a subgroup $H\le G$ is \emph{geometrically dense} with respect to a given general type action of $G$ on a hyperbolic space $S$ if $H$ does not fix any point of $\partial S$ and $\Lambda (H)=\Lambda(G)$.

\begin{ex}
Let $G$ be a non-elementary hyperbolic group acting on its Cayley graph with respect to some finite generating set. Then $E(G)$ is finite and
\begin{enumerate}
\item[(a)] every infinite normal subgroup of $G$ is geometrically dense;
\item[(b)] a quasi-convex subgroup $H\le G$ is geometrically dense iff it is of finite index.
\end{enumerate}
\end{ex}

Recall that an IRS $\mu $ of $G$ is \emph{ergodic} if so is the action of $G$ on $(\SG,\mu)$. That is, for every $G$-invariant subset $A\subseteq \SG$, we have $\mu(A)=0$ or $\mu(A)=1$. We also say that an \emph{IRS $\mu$ has some property $P$} if $\mu$-almost every subgroup of $G$ has $P$. Our main goal is record the following fairly elementary (but seemingly useful) fact.

\begin{thm}\label{main}
Let $G$ be a countable group acting on a hyperbolic space $S$. Assume that the action of $G$ on $S$ is of general type and let $\mu $ be an ergodic IRS of $G$. Then either $\mu$ is geometrically dense or $\mu$ is contained in $E(G)$.
\end{thm}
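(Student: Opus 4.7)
The plan is to exploit ergodicity to pin down the $\mu$-a.s. type of $H$ and then eliminate the intermediate cases using dynamics of loxodromic elements on $\partial S$. Note first that since $\Lambda(gHg^{-1})=g\Lambda(H)$ and $E(G)$ is normal, each of the properties ``$|\Lambda(H)|=k$'' for $k\in\{0,1,2,\infty\}$, ``$H$ fixes a point of $\partial S$'', ``$\Lambda(H)=\Lambda(G)$'', and ``$H\le E(G)$'' is a $G$-invariant Borel subset of $\SG$, hence $\mu$-a.s. of measure $0$ or $1$.

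Suppose $\mu$-a.s. one of the following holds: (a) $|\Lambda(H)|=1$, (b) $|\Lambda(H)|=2$, or (c) $H$ is non-elementary and fixes a point of $\partial S$. In (c) the fixed point is unique, since a non-elementary subgroup fixing two distinct boundary points would preserve the axis between them and hence be elementary. Taking fixed points gives a $G$-equivariant measurable map from $\mathrm{supp}(\mu)$ to $\partial S$ or, in case (b), to $\binom{\partial S}{2}$; pushing forward $\mu$ (after taking a marginal in (b)) produces a $G$-invariant probability measure on $\partial S$. However, a general type action admits no such measure: the north--south dynamics of any loxodromic $g\in G$ forces a $g$-invariant probability measure to concentrate on $\{g^+,g^-\}$, and two independent loxodromics --- available by non-elementarity plus absence of a global fixed point --- have disjoint fixed-point pairs. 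So none of (a)--(c) occurs.

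Next assume $\mu$-a.s. $\Lambda(H)$ is infinite and $H$ fixes no point of $\partial S$. Push $\mu$ forward by $H\mapsto\Lambda(H)$ to a $G$-invariant probability measure $\tau$ on closed subsets of $\Lambda(G)$, supported on infinite sets. For any loxodromic $g\in G$ and any closed $C\subseteq\Lambda(G)$ with $g^-\notin C$, $g^nC\to\{g^+\}$ in Hausdorff distance as $n\to\infty$; combined with $g_*\tau=\tau$ and the $\tau$-a.s. infiniteness of $C$, this forces $\tau(\{C:g^-\notin C\})=0$. Running over the countably many loxodromic elements in $G$ and using that their repelling fixed points are dense in $\Lambda(G)$, we conclude $\mu$-a.s. $\Lambda(H)=\Lambda(G)$, which combined with no fixed point yields geometric density.

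Finally, suppose $\mu$-a.s. $H$ is elliptic. It suffices to show $\mu(\{H:g\in H\})=0$ for each $g\in G\setminus E(G)$, since $G$ is countable. Using that $E(G)$ equals the pointwise stabilizer of $\Lambda(G)$ in $G$ (which follows from normality and the argument for Proposition~\ref{propeg}: every normal elliptic subgroup fixes $\Lambda(G)$ pointwise), any such $g$ is elliptic but acts non-trivially on $\Lambda(G)$. A Schottky/ping-pong argument using a loxodromic $h\in G$ whose fixed points avoid the region where $g$ acts non-trivially should produce a loxodromic element in $\langle g,h^kgh^{-k}\rangle$ for every $|k|$ larger than some threshold $N_0$. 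Setting $y_n=h^{N_0 n}gh^{-N_0 n}$ (distinct for infinitely many $n$, since otherwise $h$ would commute with $g$), any $\mu$-positive intersection $\{H:y_m,y_n\in H\}$ with $m\ne n$ would place the non-elliptic subgroup $\langle y_m,y_n\rangle$ inside the elliptic $H$, a contradiction. Hence these sets are pairwise essentially disjoint, and by conjugation invariance each has the same measure $p_g:=\mu(\{H:g\in H\})$, so $\sum_n p_g\le 1$ forces $p_g=0$. The delicate part --- and the main obstacle --- is the ping-pong step, which requires careful choice of $h$ and analysis of the action of an elliptic element on $\Lambda(G)$ in a possibly non-proper $\delta$-hyperbolic setting.
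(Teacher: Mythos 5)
Your strategy is genuinely different from the paper's (ergodic case analysis over the five action types, invariant measures on the boundary, and a disjoint-conjugates counting trick), but as written it has a gap exactly at its load-bearing step, the elliptic case. The claim you yourself flag as ``the delicate part and the main obstacle'' --- that for an elliptic $g\notin E(G)$ one can choose a loxodromic $h$ and a threshold $N_0$ so that $\langle g, h^kgh^{-k}\rangle$ contains a loxodromic for all $|k|\ge N_0$ --- is not proved, and it is precisely the hyperbolic-geometry content that the paper supplies through Lemma \ref{gh} (quoted from \cite{Osi13}) together with Lemmas \ref{Eg} and \ref{com}: if $gh^+\ne h^+$ and $gh^-\ne h^-$, then $[g,h^n]=g\,(h^ng^{-1}h^{-n})\in\langle g,h^ngh^{-n}\rangle$ is loxodromic for all large $n$. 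Note also that your prescription for choosing $h$ is backwards: a loxodromic ``whose fixed points avoid the region where $g$ acts non-trivially'' satisfies $gh^{\pm}=h^{\pm}$, and then every generator of $\langle g,h^kgh^{-k}\rangle$ fixes $h^{\pm}$; such a group can perfectly well be elliptic (e.g.\ elliptic isometries of a tree fixing the axis of $h$ pointwise, with $g$ still moving other limit points, hence $g\notin E(G)$), so no loxodromic need appear. What is needed is the opposite choice: by Lemma \ref{dense} take $h^{\pm}$ in a small neighborhood of a limit point \emph{moved} by $g$, so that $gh^{\pm}\ne h^{\pm}$, and then invoke Lemma \ref{com}. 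Once this is imported, your counting argument (conjugation invariance gives $\mu(\{H: y_n\in H\})=\mu(\{H: g\in H\})$ for all $n$, and the pairwise intersections are null because $\langle y_m,y_n\rangle$ is non-elliptic while $H$ is a.s.\ elliptic) does close that case.

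The remaining cases also need patching, though less seriously. In the general type case, ``$g^nC\to\{g^+\}$ in Hausdorff distance'' is not justified when $S$ is not proper: $\partial S$ and $\Lambda(G)$ need not be compact, a closed $C\subseteq\Lambda(G)\setminus\{g^-\}$ need not be compact, and pointwise north--south dynamics does not by itself give uniform convergence; moreover ``this forces $\tau(\{C: g^-\notin C\})=0$'' is an assertion, not an argument --- one needs, say, Poincar\'e recurrence for $g$ acting on the space of closed subsets in a suitable topology, plus measurability of $H\mapsto\Lambda(H)$. Likewise, the non-existence of a $G$-invariant probability measure on $\partial S$ for a general type action and the Borel-ness of the five type events are true but require proofs in this generality (recurrence plus pointwise north--south dynamics on the separable set $\Lambda(G)$ will do). For comparison, the paper avoids boundary measures altogether: Poincar\'e recurrence on $\SG$ shows $\mu$-a.e.\ $H$ is recurrent (Proposition \ref{rec}), and then a single deterministic argument (Theorem \ref{main1}), built on the same Lemmas \ref{Eg} and \ref{com} plus the computation (\ref{gns}) showing $h^{g^n}s\to g^-$ for conjugates $h^{g^n}\in H$, yields both $\Lambda(H)=\Lambda(G)$ and the absence of fixed points at once; ergodicity is then only used to turn the pointwise dichotomy into the stated one for $\mu$. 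If you want to keep your structure, import Lemmas \ref{Eg} and \ref{com} for the elliptic case and replace the Hausdorff-distance step by a recurrence (or conjugation) argument of this kind.
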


The case of a closed non-elementary subgroup $G$ of a simple Lie group of $\mathbb R$-rank $1$ was previously considered in \cite[Proposition 11.3]{7s}; the result obtained there is stated in a slightly different way, but is easily seen to be equivalent to a particular case of Theorem \ref{main}. A similar theorem for groups acting on $CAT(0)$ spaces was also obtained in \cite{DGLL}.

We mention one particular application of Theorem \ref{main}. Recall that a group is \emph{acylindrically hyperbolic} if it admits a non-elementary acylindrical action on a hyperbolic space. (For more details, we refer to Section 4.2.) Examples of such groups include non-elementary hyperbolic and relatively hyperbolic groups, infinite mapping class groups of punctured surfaces, $Out(F_n)$ for $n\ge 2$, most $3$-manifold groups, finitely presented groups of deficiency at least $2$, and many other examples of interest, see \cite{DGO,MO,Osi15,Osi13} and references therein. Bowen, Grigorchuk, and Kravchenko \cite{BGK} proved that every acylindrically hyperbolic group has continuously many non-atomic ergodic IRSs.

We say that a subgroup $H$ of an acylindrically hyperbolic group $G$ is \emph{totally geometrically dense} if it is geometrically dense with respect to every non-elementary acylindrical action of $G$ on a hyperbolic space. Every acylindrically hyperbolic group $G$ contains a unique maximal normal finite subgroup, denoted $K(G)$ \cite[Theorem 2.24]{DGO}.

\begin{cor}\label{cor1}
Let $\mu$ be an ergodic IRS of a countable acylindrically hyperbolic group. Then either $\mu$ is totally geometrically dense or $\mu$ is supported on subgroups of $K(G)$; in particular, $\mu$ is acylindrically hyperbolic in the former case and finite in the latter case.
\end{cor}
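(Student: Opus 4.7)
The plan is to apply Theorem \ref{main} to every non-elementary acylindrical action of $G$ on a hyperbolic space, and to observe that the ``contained in $E(G)$'' alternative localizes to a single action-independent invariant, namely $K(G)$.

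First, I would note that any non-elementary acylindrical action of $G$ on a hyperbolic space $S$ is automatically of general type. This follows from the classification of acylindrical actions on hyperbolic spaces into elliptic, lineal, or general type; since the first two have at most two boundary limit points they are elementary in the sense of this paper, so non-elementarity forces the action to be of general type. Hence Theorem \ref{main} applies to every acylindrical action witnessing the acylindrical hyperbolicity of $G$.

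Next, I would verify the key identification $E(G) = K(G)$ for every non-elementary acylindrical action of $G$ on $S$. The inclusion $K(G) \subseteq E(G)$ is immediate since any finite normal subgroup has finite (hence bounded) orbits and is therefore elliptic. For the reverse direction, I claim $E(G)$ is finite. Set $N = E(G)$, pick $x_0 \in S$, and let $D = \diam (N \cdot x_0)$, which is finite by ellipticity. Choose any loxodromic $g \in G$, which exists because the action is of general type. Normality gives $N \cdot g^n x_0 = g^n (N \cdot x_0)$, also of diameter at most $D$, for every $n$, so every element of $N$ moves both $x_0$ and $g^n x_0$ by at most $D$. For $n$ large enough that $d(x_0, g^n x_0)$ exceeds the acylindricity threshold at scale $D$, acylindricity bounds $|N|$ uniformly. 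Thus $E(G)$ is a normal finite subgroup of $G$ and is therefore contained in $K(G)$.

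With these preliminaries in place, Theorem \ref{main} applied to any fixed non-elementary acylindrical action yields: $\mu$ is either geometrically dense with respect to that action, or supported on subgroups of $E(G) = K(G)$. Since the second alternative does not depend on the choice of action, the stated dichotomy follows: either $\mu$ is geometrically dense with respect to every non-elementary acylindrical action (i.e.\ totally geometrically dense), or $\mu$ is supported on subgroups of $K(G)$. For the ``in particular'' clause, in the latter case $\mu$-almost every subgroup is finite since $K(G)$ is; in the former case, fix any non-elementary acylindrical action of $G$ on $S$ and observe that $\mu$-almost every $H \le G$ has $\Lambda(H) = \Lambda(G)$ (infinite) and fixes no point of $\partial S$, so the restriction of the action to $H$ is non-elementary and of general type, and acylindricity is inherited by subgroups tautologically. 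Thus $H$ is acylindrically hyperbolic. The only substantive geometric input beyond Theorem \ref{main} is the identity $E(G) = K(G)$; everything else is bookkeeping.
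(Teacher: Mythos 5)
Your identification $E(G)=K(G)$ is correct, and your direct argument via acylindricity (every $a\in E(G)$ moves both $x_0$ and $g^nx_0$ by at most $D=\diam(E(G)x_0)$, so acylindricity bounds $|E(G)|$ once $\d(x_0,g^nx_0)$ is large) is a legitimate self-contained replacement for the paper's citation of \cite[Lemma 6.15]{DGO}. Likewise, the observation that a non-elementary acylindrical action cannot be quasi-parabolic, hence is of general type, is correct, though it is itself a theorem from \cite{Osi13} and should be cited rather than treated as obvious. The ``in particular'' clause is handled correctly.

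The genuine gap is in the aggregation step over actions. Applying Theorem \ref{main} separately to each non-elementary acylindrical action only yields: for every such action $\alpha$, either $\mu$-a.e.\ $H$ is geometrically dense with respect to $\alpha$, or $\mu$ is supported on subgroups of $K(G)$. If the latter fails, you get ``for every $\alpha$, $\mu$-a.e.\ $H$ is geometrically dense for $\alpha$''; but by the paper's convention, ``$\mu$ is totally geometrically dense'' means ``$\mu$-a.e.\ $H$ is geometrically dense for every $\alpha$'', and this exchange of quantifiers is not automatic: there are in general uncountably many non-elementary acylindrical actions, and the union of the corresponding null exceptional sets need not be null (nor is the set of totally geometrically dense subgroups obviously measurable). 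The repair is to descend to the deterministic level, which is what makes the paper's proof work: by Proposition \ref{rec} the set $R$ of recurrent subgroups is conull and is defined with no reference to any action, and by Theorem \ref{main1}, combined with your identity $E(G)=K(G)$ valid for every non-elementary acylindrical action, every $H\in R$ with $H\not\le K(G)$ is geometrically dense for all such actions simultaneously. Together with ergodicity applied to the $G$-invariant (finite, hence measurable) set of subgroups of $K(G)$, this produces a single conull set witnessing total geometric density, and the dichotomy follows. With that substitution your argument is complete and otherwise follows the paper's route.
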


This corollary can be reformulated in terms of actions as follows.

\begin{cor}\label{cor2}
Let $G$ be a countable acylindrically hyperbolic group. Suppose that $G$ acts ergodically by measure preserving transformations of a Borel probability space $(X,\nu)$. Then either the stabilizer of $\nu$-a.e. point of $X$ is acylindrically hyperbolic or the stabilizer of $\nu$-a.e. point of $X$ is finite. If, in addition, $K(G)=1$, the action is essentially free in the latter case.
\end{cor}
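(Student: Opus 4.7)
The plan is to deduce Corollary \ref{cor2} directly from Corollary \ref{cor1} by passing from the dynamical action on $(X,\nu)$ to the induced IRS on $\SG$. Concretely, I would use Example \ref{ex}: let $f\colon X\to \SG$ be the stabilizer map $x\mapsto G_x$, which is measurable by the result of Auslander--Moore cited in the excerpt, and define $\mu$ on $\SG$ by $\mu(A)=\nu(f^{-1}(A))$. As noted in Example \ref{ex}, the identity $gG_xg^{-1}=G_{gx}$ shows $f$ is $G$-equivariant, so $\mu$ is a $G$-invariant Borel probability measure, i.e., an IRS of $G$.

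The first step is to verify that $\mu$ inherits ergodicity from the action on $(X,\nu)$. If $A\subseteq \SG$ is any $G$-invariant Borel set, then by equivariance of $f$ the preimage $f^{-1}(A)\subseteq X$ is $G$-invariant, so ergodicity of the action on $(X,\nu)$ forces $\nu(f^{-1}(A))\in\{0,1\}$, which is exactly $\mu(A)\in\{0,1\}$. Hence $\mu$ is an ergodic IRS of the acylindrically hyperbolic group $G$.

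Now I would apply Corollary \ref{cor1} to $\mu$. It yields a dichotomy: either $\mu$-almost every subgroup is totally geometrically dense (and in particular acylindrically hyperbolic), or $\mu$ is supported on subgroups of $K(G)$ (which, being subgroups of the finite group $K(G)$, are all finite). Pulling these statements back along $f$, we obtain that either $G_x$ is acylindrically hyperbolic for $\nu$-a.e.\ $x\in X$, or $G_x$ is finite for $\nu$-a.e.\ $x\in X$, which is the main assertion.

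For the supplementary claim, assume additionally that $K(G)=\{1\}$. Then every subgroup of $K(G)$ is trivial, so in the second case we have $G_x=\{1\}$ for $\nu$-a.e.\ $x$; this is precisely the statement that the action is essentially free. There is no real obstacle here: the argument is a one-line reduction to Corollary \ref{cor1}, and the only point requiring care is the transfer of ergodicity across $f$, which is immediate from equivariance of the stabilizer map.
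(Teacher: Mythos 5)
Your proposal is correct and follows exactly the route the paper intends: the paper's proof is the one-line remark that Corollary \ref{cor2} is a reformulation of Corollary \ref{cor1} via the stabilizer IRS of Example \ref{ex}, and you have simply spelled out the details (equivariance of the stabilizer map, transfer of ergodicity, and the $K(G)=1$ case). No gaps.
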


In particular, this corollary implies that if $G$ is an acylindrically hyperbolic group and $K(G)=1$, then every p.m.p. action of $G$ with ``small" stabilizers (e.g., amenable) is essentially free. This statement can be made precise using the notion of a small subgroup introduced in \cite{HO}.

The proof of our main theorem consists of two steps. First we observe that every IRS of a countable group satisfies a purely algebraic normality-like condition (Proposition \ref{rec}), and then show that this condition implies the dichotomy as in Theorem \ref{main}. In fact, the first step can be easily extracted from \cite{GG} and the author is grateful to Yair Glasner for explaining details of \cite{GG}. The author is also grateful to the referee for careful reading of the manuscript and useful comments.

\section{Recurrent subgroups and dynamical systems}

The following notion plays the central role in our paper.

\begin{defn}\label{dr}
We say that a subgroup $H$ of a group $G$ is \emph{recurrent} if for every $g\in G$, $H$ is a recurrent point of the  dynamical system $(\SG, g)$. That is, for every open neighborhood $\mathcal U\subseteq \SG$ of $H$, there are infinitely many $n\in \mathbb N$ such that $g^n\ldot H=g^nHg^{-n}\in \mathcal U$.
\end{defn}

It is useful to reformulate this definition in a purely algebraic form.

\begin{lem}\label{lr}
A subgroup $H$ of a group $G$ is recurrent if and only if for every $g\in G$ and every finite subset $\mathcal F\subseteq G$, there exists at least one (equivalently, infinitely many) $n\in \mathbb N$ such that $H\cap \mathcal F= g^nHg^{-n}\cap \mathcal F$.
\end{lem}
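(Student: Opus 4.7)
The plan is to translate the topological notion of recurrence into the combinatorial condition by describing a neighborhood basis at $H$ in $\SG$. The topology on $\SG\subseteq \{0,1\}^G$ has a basis at $H$ consisting of the cylinders $\mathcal U_{\mathcal F}(H) = \{K\in \SG : K\cap \mathcal F = H\cap \mathcal F\}$ as $\mathcal F$ ranges over finite subsets of $G$, since the product topology on $\{0,1\}^G$ is generated by fixing values at finitely many coordinates and a subgroup is determined by its characteristic function. The assertion $g^n Hg^{-n} \in \mathcal U_{\mathcal F}(H)$ is by definition the equation $g^n Hg^{-n} \cap \mathcal F = H \cap \mathcal F$, so Definition \ref{dr} unpacks verbatim into the ``infinitely many $n$'' version of the condition. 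This handles the main equivalence.

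The remaining task is to show that ``at least one $n$'' already forces ``infinitely many $n$'' (the converse being trivial). Fix $g\in G$ and a finite $\mathcal F\subseteq G$, and suppose for contradiction that only finitely many positive integers $n_1, \dots, n_k$ satisfy $g^{n_i} Hg^{-n_i}\cap \mathcal F = H\cap \mathcal F$. I would then distinguish two cases. If some $n_i$ genuinely normalizes $H$, i.e.\ $g^{n_i} Hg^{-n_i} = H$, then an easy induction on $m$ gives $g^{mn_i}Hg^{-mn_i} = H$ for every $m\ge 1$, producing an infinite family of solutions and contradicting finiteness. Otherwise, for each $i$ choose an element $h_i\in G$ lying in the symmetric difference of $H$ and $g^{n_i} Hg^{-n_i}$, enlarge $\mathcal F$ to $\mathcal F' = \mathcal F\cup \{h_1,\dots, h_k\}$, and apply the hypothesis to the pair $(g,\mathcal F')$. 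The resulting $m$ with $g^m Hg^{-m}\cap \mathcal F' = H\cap \mathcal F'$ restricts to a solution on $\mathcal F$, so $m\in\{n_1,\dots,n_k\}$; yet by construction of $\mathcal F'$ no $n_i$ can satisfy the equation on $\mathcal F'$, a contradiction.

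There is no substantive obstacle here: the argument is essentially the standard unpacking of the product topology on $\{0,1\}^G$, supplemented by a short normalization-versus-non-normalization dichotomy that upgrades ``at least once'' to ``infinitely often.'' No input from hyperbolic geometry or measure theory enters the proof of this lemma.
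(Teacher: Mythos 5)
Your proof is correct and follows the same route as the paper: you identify the cylinder sets $\mathcal U_{\mathcal F}(H)=\{K\le G \mid K\cap\mathcal F=H\cap\mathcal F\}$ as a neighborhood basis of $H$ in $\SG$ and unpack Definition \ref{dr} accordingly. The only addition is that you explicitly justify the parenthetical ``at least one $n$, equivalently infinitely many'' via the normalizer-versus-symmetric-difference dichotomy (enlarging $\mathcal F$ by witnesses $h_i$), a point the paper's proof treats as immediate; your argument for that upgrade is also correct.
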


\begin{proof}
By the definition of the topology of $\SG$, the base of neighborhoods of $H$ is formed by the subsets
$$
\mathcal U_{\mathcal F} (H)=\{ K\le G \mid K\cap \mathcal F = H\cap \mathcal F\},
$$
where $\mathcal F$ ranges in the set of all finite subsets of $G$. Clearly $H$ is recurrent if and only if Definition \ref{dr} holds for neighborhoods from this base. It remains to note that the condition $g\ldot H=g^nHg^{-n}\in \mathcal U_{\mathcal F}(H)$ is equivalent to $g^nHg^{-n}\cap \mathcal F = H\cap \mathcal F$.
\end{proof}

Throughout this paper we write $x^y$ for $y^{-1}xy$.

\begin{cor}\label{cr}
Let $H$ be a recurrent subgroup of a group $G$. Then for every $g\in G$ and every $h\in H$, there exist infinitely many $n\in \mathbb N$ such that $h^{g^n}\in H$.
\end{cor}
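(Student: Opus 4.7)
The plan is to derive the corollary as a one-step specialization of Lemma \ref{lr}. Given $g\in G$ and $h\in H$, I would simply plug the singleton finite set $\mathcal F=\{h\}$ into the lemma and unravel what the equality of intersections says.

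Concretely, Lemma \ref{lr} produces infinitely many $n\in\mathbb N$ such that
$$
H\cap \{h\}= g^nHg^{-n}\cap \{h\}.
$$
Because $h\in H$, the left-hand side is $\{h\}$, so for each such $n$ one has $h\in g^nHg^{-n}$. Writing $h=g^n h' g^{-n}$ with $h'\in H$ and solving for $h'$ gives $h'=g^{-n}hg^n$, which under the paper's convention $x^y=y^{-1}xy$ is precisely $h^{g^n}$. Hence $h^{g^n}\in H$ for infinitely many $n\in\mathbb N$, as required.

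Since Lemma \ref{lr} has already been established, the argument is entirely formal: it amounts to picking the smallest possible test set $\mathcal F$ and translating membership in the conjugate subgroup into a statement about the conjugate of a single element. The only point that requires a moment of care is matching conventions — one must verify that the direction of conjugation produced by the lemma (namely $g^nHg^{-n}$) is consistent with the notation $h^{g^n}=g^{-n}hg^n$ used in the corollary, so that the element witnessing membership on the right-hand side is the one named in the conclusion. I expect no real obstacle beyond this bookkeeping.
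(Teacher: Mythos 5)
Your proposal is correct and is exactly the paper's argument: the paper proves the corollary by applying Lemma \ref{lr} to $\mathcal F=\{h\}$, and your unravelling of $h\in g^nHg^{-n}$ into $h^{g^n}=g^{-n}hg^n\in H$ is just the bookkeeping the paper leaves implicit.
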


\begin{proof}
Apply Lemma \ref{lr} to $\mathcal F=\{ h\}$.
\end{proof}

\begin{ex}
\begin{enumerate}
\item[(a)] If $G$ is a torsion group, then every subgroup of $G$ is recurrent. To make this notion more useful in the torsion case, one can state Definition \ref{dr} using arbitrary infinite subsemigroups of $G$ instead of cyclic subgroups $\langle g\rangle$. Proposition \ref{rec} proved below holds for this definition as well. However we do not need this in our paper.
\item[(b)] Suppose that the normalizer of $H$ has finite index in $G$. Then $H$ is recurrent. The converse is false in general, but is true, for example, if $G$ is polycyclic. We leave this as an exercise for the reader.
\end{enumerate}
\end{ex}

The main result of this section is the following. As we already mentioned in the introduction, it can be extracted from \cite{GG} although it is not stated explicitly in this form there.

\begin{prop}\label{rec}
Let $G$ be a countable group and let $\mu$ be an IRS of $G$. Then $\mu$-a.e. subgroup $H\le G$  is recurrent.
\end{prop}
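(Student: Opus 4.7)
The plan is to reduce the statement to the Poincaré recurrence theorem, exploiting the fact that $G$ is countable so that $\mathcal{S}(G)$ is a compact metrizable space with a countable base of clopen sets.

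First I would fix $g \in G$ and consider the homeomorphism $T_g : \mathcal{S}(G) \to \mathcal{S}(G)$ given by $T_g(H) = gHg^{-1}$. Since $\mu$ is $G$-invariant, $T_g$ preserves $\mu$, so $(\mathcal{S}(G), \mu, T_g)$ is a measure-preserving system on a standard probability space. I would then invoke the classical Poincaré recurrence theorem: for any measurable set $A \subseteq \mathcal{S}(G)$, $\mu$-a.e.\ point of $A$ returns to $A$ infinitely often under iterates of $T_g$.

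The next step is to upgrade set-recurrence to topological point-recurrence. Using Lemma \ref{lr}, the neighborhoods $\mathcal{U}_{\mathcal{F}}(H)$ with $\mathcal{F}$ ranging over finite subsets of $G$ form a base at $H$; since $G$ is countable, there are only countably many such sets $\mathcal{U}_{\mathcal{F}}$, call them $\{U_k\}_{k \in \mathbb{N}}$, and they form a countable base for the topology on $\mathcal{S}(G)$. Applying Poincaré recurrence to each $U_k$ yields a conull set $A_k \subseteq U_k$ of points which return to $U_k$ infinitely often under $T_g$. Then $\mathcal{S}(G) \setminus \bigcup_k (U_k \setminus A_k)$ is a conull set $R_g$ consisting of points recurrent for $T_g$: if $H \in R_g$ and $\mathcal{U}$ is an open neighborhood of $H$, pick some $U_k$ with $H \in U_k \subseteq \mathcal{U}$, then $H \in A_k$, so $T_g^n(H) \in U_k \subseteq \mathcal{U}$ for infinitely many $n$.

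Finally, setting $R = \bigcap_{g \in G} R_g$, the countability of $G$ gives $\mu(R) = 1$, and every $H \in R$ satisfies Definition \ref{dr} for every $g \in G$, hence is recurrent. I do not anticipate any real obstacle: the content is essentially the observation that Poincaré recurrence applies simultaneously to the countable family of homeomorphisms $\{T_g\}_{g \in G}$, together with the standard passage from measurable recurrence to topological recurrence using second countability of $\mathcal{S}(G)$.
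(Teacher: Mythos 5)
Your proposal is correct and follows essentially the same route as the paper: the paper also defines $R_g$ as the set of recurrent points of $(\SG,g)$, invokes the (topological) Poincar\'e recurrence theorem using second countability of $\SG$ to get $\mu(R_g)=1$, and intersects over the countably many $g\in G$. The only difference is that you unpack the standard derivation of the topological recurrence statement from the measure-theoretic one via a countable base, which the paper simply cites.
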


\begin{proof}
Let $\mu $ be an IRS on $G$. For an element $g\in G$, let $R_g$ denote the set of recurrent points of the dynamical system $(\SG, g)$. Since the space $\SG$ is second countable, we can apply the topological version of the Poincar\'e recurrence theorem, which states that $\mu (R_g)=1$. Let $R=\bigcap_{g\in G} R_g$. Clearly $R$ is the set of all recurrent subgroups of $G$. Since $G$ is countable and $\mu$ is countably additive, we have $\mu(R)=1$.
\end{proof}

\section{Elliptic radical of groups acting on hyperbolic spaces}

\paragraph{3.1. Hyperbolic spaces and group actions.}
All actions of groups on metric spaces are supposed to be isometric by default. We first recall necessary definitions and properties of groups acting on hyperbolic spaces. Although many existent proofs of the results mentioned below assume properness of the space, they also hold in the general case; we refer to Sections 8.1-8.2 in \cite{Gro} or to \cite{H} for complete proofs in a more general context.

Let $(S,d)$ be a (not necessarily proper) hyperbolic space. The \emph{Gromov product} of two points $x,y\in S$ with respect to a point $z\in S$ is defined by
$$
(x,y)_z=\frac12( \d(x,z) +\d (y,z) -\d (x,y)).
$$

The next lemma is well-known (see \cite{Gro} or Proposition 21 in \cite[Chapter 2]{GH}).

\begin{lem}\label{8d}
Let $S$ be a $\delta$-hyperbolic space. Then for any $x,y,z,t\in S$, we have
$$
(x,z)_t\ge \min \{ (x,y)_t, (y,z)_t\} -8\delta.
$$
\end{lem}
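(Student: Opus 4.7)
The inequality asserted in the lemma is Gromov's \emph{four-point condition}, one of the standard equivalent characterizations of $\delta$-hyperbolicity (equivalent, up to multiplicative constants in $\delta$, to the thin or slim triangles condition). My plan is to derive it from whichever formulation Section 3 of the paper adopts, noting that the slightly generous constant $8\delta$ in the statement is chosen precisely so that it works uniformly across these equivalent definitions for spaces that are not assumed to be geodesic or proper.

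If the paper takes the four-point condition itself as the defining property (say with constant $\delta$, $2\delta$, or $4\delta$), then the lemma is the definition up to rewriting the constant, and nothing is to be done. If instead the working definition is in terms of (approximately) geodesic triangles with slim/thin sides, I would reduce to the four-point condition by the standard \emph{tree approximation lemma}: for any finite set $F\subset S$ of cardinality $n$, there exist a finite metric tree $T$ and a map $\varphi\colon F\to T$ which distorts distances by at most $C(n)\delta$. Applied to $F=\{x,y,z,t\}$ (where $n=4$ gives a small explicit $C$), and using that every metric tree satisfies the four-point condition with equality $(a,c)_e=\min\{(a,b)_e,(b,c)_e\}$, one obtains the inequality in $S$ by pulling back and absorbing the finitely many $C\delta$ perturbation errors coming from the three Gromov products involved.

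The conceptual step is trivial once tree approximation (or any of the equivalent characterizations) is available; the only real task is bookkeeping. Each Gromov product $(a,b)_e$ is a combination of three distances, so a $C\delta$ distortion of distances produces at most a $\tfrac{3C}{2}\delta$ distortion of $(a,b)_e$, and one has to check that the sum of these errors across the three Gromov products appearing in the inequality is dominated by $8\delta$. The main \emph{obstacle}, such as it is, is therefore the non-geodesic, non-proper setting: one cannot simply talk about ``the'' geodesic $[x,y]$ and its nearest point to $t$. This is precisely the reason the paper invokes Sections~8.1--8.2 of \cite{Gro} and the reference \cite{H}, which establish the four-point inequality (and the underlying tree approximation) without any geodesic or properness hypothesis. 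Accordingly, my write-up would be a one-line appeal to those references, simply verifying that the multiplicative constant they produce is no worse than $8$.
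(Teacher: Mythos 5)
Your proposal is in line with the paper's own treatment: the paper gives no argument for this lemma at all, stating it as well known and citing Sections 8.1--8.2 of \cite{Gro} and Proposition 21 in Chapter 2 of \cite{GH}, which is precisely the one-line appeal you end with, and the constant $8\delta$ is exactly the one produced by those sources. The tree-approximation sketch is a legitimate standard alternative (its constant bookkeeping would need the care you acknowledge), but it is not needed here.
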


By $\partial S$ we denote the \emph{Gromov boundary} of $S$, which is defined as the set of equivalence classes of sequences converging at infinity. More precisely, a sequence $(x_n)$ of elements of $S$ \emph{converges at infinity} if $(x_i, x_j)_s\to \infty $ as $i,j\to \infty$; this is independent of the choice of the base point $s\in S$. Two such sequences $(x_i)$ and $(y_i)$ are \emph{equivalent} if $(x_i,y_j)_s\to \infty$ as $i,j\to \infty$. If $x$ is the equivalence class of $(x_i)$, we say that the sequence $x_i$ \emph{converges} to $x$. This naturally defines a  topology on $\widehat S=S\cup \partial S$ extending the topology on $S$ so that $S$ is dense in $\widehat S$.

\begin{rem}\label{rem}
It is useful to remember the following elementary fact: if $(x_i)$, $(y_i)$ are two sequences of points in $S$ converging to infinity such that $\sup_i\d(x_i, y_i) <\infty$, then $(x_i)$ and $(y_i)$ converge to the same point of $\partial S$.
\end{rem}

We denote by $\Lambda (G)$ the set of limit points of $G$ on $\partial S$. That is, $$\Lambda (G)=\overline{Gs}\cap \partial S,$$ where $s\in S$ and $\overline{Gs}$ is the closure of the $G$-orbit of $s$ in $\widehat S$. Using Remark \ref{rem} it is easy to show that this definition is independent on the choice of a particular orbit.

It is also easy to check that if a sequence $(x_i)$ converges to a point $x\in \partial S$, then for every $g\in G$, the sequence $(gx_i)$ also converges to a point of $\partial S$, which only depends on $x$ and $g$. This allows one to define an action of $G$ on $\partial S$, which turns out to be continuous.

An element $g\in G$ is called \emph{elliptic} if it fixes (setwise) a bounded subset of $S$. An element $g\in G$ is  \emph{loxodromic} if the map $\mathbb Z\to S$ defined by $n\mapsto g^ns$ is a quasi-isometric embedding for every $s\in S$ (here we assume that $\mathbb Z$ is equipped with the standard metric). Equivalently, $g$ is loxodromic if it is not elliptic and fixes exactly two distinct points $g^+$ and $g^-$ on $\partial S$.

For every loxodromic element $g\in G$, we have
$$
\lim\limits_{n\to \infty} g^ns= g^+ \;\; \forall s\in \widehat S\setminus \{ g^-\}\;\;\;\;{\rm and} \;\;\;\; \lim\limits_{n\to \infty} g^{-n}s= g^- \;\; \forall s\in \widehat S\setminus \{ g^+\}
$$
as $n\to \infty$. This is called the \emph{north-south dynamics} of the action of a loxodromic element. In particular, we have $\Lambda (\langle g\rangle)=\{ g^+, g^-\}$.

Two loxodromic elements $g,h\in G$ are called {\it independent} if $\{ g^+, g^-\}\cap \{ h^+, h^-\} = \emptyset$. We denote by $\L(G)$ the set of all loxodromic elements of $G$ and let $$\mathcal H(G)=\{ g^\pm \mid g\in \L(G)\}.$$ We also denote by $Fix(G)$ the set of fixed points of $G$ on $\partial S$.

Possible actions of groups on hyperbolic spaces can be classified as follows according to the cardinality of $\Lambda (G)$ (see \cite[Sections 8.1-8.2]{Gro}, \cite{H},  or \cite[Section 3a]{CCMT})

\begin{enumerate}
\item[1)] (\emph{elliptic action})\; $|\Lambda (G)|=0$. Equivalently,  $G$ has bounded orbits.
\item[2)] (\emph{parabolic action})\;  $|\Lambda (G)|=1$. Equivalently, $G$ has unbounded orbits and contains no loxodromic elements.  In this case $Fix(G)=\Lambda(G)$.
\item[3)] (\emph{lineal action})\; $|\Lambda (G)|=2$. Equivalently, $G$ contains a loxodromic element and any two loxodromic elements have the same limit points on $\partial S$. In this case $Fix(G)\subseteq \Lambda(G)$.

\item[4)] $|\Lambda (G)|=\infty$. Then $G$ always contains loxodromic elements. In turn, this case breaks into two subcases.
\begin{enumerate}
\item[a)] (\emph{quasi-parabolic action}) $Fix(G)\ne \emptyset$. Then $G$ fixes a unique point of $\partial S$.
\item[b)] (\emph{general type action}) $Fix(G)=\emptyset$. Equivalently, $G$ contains at least $2$ (or infinitely many) independent loxodromic elements.
\end{enumerate}
\end{enumerate}

The action of $G$ is called \emph{elementary}  in cases 1)--3) and non-elementary in case 4).

The following can be found in \cite[Section 8.2]{Gro}; alternatively, see \cite[Theorems 2.6, 2.9]{H}.

\begin{lem}\label{dense}
Suppose that a group $G$ acts non-elementarily on a hyperbolic space $S$. Then $\mathcal H(G)$ is dense in $\Lambda (G)$. Moreover, if the action is of general type, then for any two points $x,y\in \Lambda (G)$ and any open neighborhoods $A,B\subseteq \partial S$ of $x$ and $y$, respectively, there exists a loxodromic element $g\in G$ such that $g^+\in A$ and $g^-\in B$.
\end{lem}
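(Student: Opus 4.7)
The plan is to prove the two assertions in sequence: first density of $\mathcal H(G)$ in $\Lambda(G)$ for a non-elementary action, then upgrade to the ping-pong style refinement in the general type case. Fix a basepoint $s \in S$; by the classification preceding the lemma, non-elementarity ($|\Lambda(G)|=\infty$) forces $\mathcal L(G) \neq \emptyset$, so at least one loxodromic element is available to conjugate.

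For density, fix $x \in \Lambda(G)$ and pick $g_n \in G$ with $g_n s \to x$. Choose a loxodromic $h \in G$. The conjugates $h_n := g_n h g_n^{-1}$ are loxodromic with fixed points $g_n h^\pm \in \mathcal H(G)$, and I would show that after passing to a subsequence $g_n h^+ \to x$. Since $g_n h^k s \to g_n h^+$ as $k \to \infty$ for each fixed $n$, one can pick $k_n$ so large that $(g_n h^+, g_n h^{k_n} s)_s > n$. Two applications of Lemma \ref{8d} give
\[
(g_n h^+, x)_s \geq \min\bigl\{(g_n h^+, g_n h^{k_n} s)_s,\; (g_n h^{k_n} s, g_n s)_s,\; (g_n s, x)_s\bigr\} - 16\delta,
\]
and the outer two terms tend to infinity by the choice of $k_n$ and by $g_n s \to x$. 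The middle Gromov product is handled by observing that $g_n s = g_n h^0 s$ and $g_n h^{k_n} s$ are both on the quasi-geodesic orbit of $h_n$ through $g_n s$, so the geodesic from $s$ to $g_n h^{k_n} s$ fellow-travels the geodesic from $s$ to $g_n s$ for a time comparable to $\d(s, g_n s)$. Diagonalising the choice of $k_n$ as a function of $n$ makes the middle term large as well, yielding $g_n h^+ \to x$.

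For the second assertion, assume the action is of general type and let $x, y \in \Lambda(G)$ with open neighborhoods $A, B \subseteq \partial S$. By the density already established together with $|\Lambda(G)| = \infty$, both $A \cap \mathcal H(G)$ and $B \cap \mathcal H(G)$ are infinite. Select loxodromic $f, h \in G$ with $f^+ \in A$, $h^- \in B$, and $\{f^+, f^-\} \cap \{h^+, h^-\} = \emptyset$; the independence condition amounts to avoiding a finite forbidden set, which is possible since each of $A, B$ carries infinitely many loxodromic fixed points. For large $N$ set $g_N := f^N h^N$. Because $h^\pm \notin \{f^+, f^-\}$, applying $h^N$ carries any point $t$ outside a small neighborhood of $h^-$ to a point near $h^+$, after which $f^N$ sends it near $f^+$; iterating shows that $g_N^+$ is arbitrarily close to $f^+$ for large $N$, hence lies in $A$. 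The analogous computation with $g_N^{-1} = h^{-N} f^{-N}$ locates $g_N^-$ arbitrarily close to $h^- \in B$, and a standard ping-pong argument certifies that $g_N$ is loxodromic.

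The main obstacle is the density step: approximating $x$ by an orbit sequence $g_n s$ does not automatically push a \emph{fixed point} of a loxodromic conjugate near $x$, and the Gromov-product estimate above must be carried out with care, especially in controlling the middle term $(g_n h^{k_n} s, g_n s)_s$, which requires balancing the rate of $g_n s \to x$ against the growth of $k_n$. Once density is in hand the ping-pong upgrade to specified pairs of attracting/repelling points is routine, driven entirely by north-south dynamics and the independence of $f$ and $h$.
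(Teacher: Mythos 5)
The paper itself gives no proof of this lemma (it is quoted from \cite{Gro} and \cite{H}), so your argument has to stand on its own, and the density step does not. The specific claim that, after passing to a subsequence, $g_nh^+\to x$ is false as stated: take $x=h^-$ and $g_n=h^{-n}$, so that $g_ns=h^{-n}s\to x$ while $g_nh^+=h^+$ for every $n$, which does not converge to $x$ (no subsequence helps). The breakdown in your estimate is exactly the middle term $(g_nh^{k_n}s,\,g_ns)_s$: the translated quasi-axis of $h_n=g_nhg_n^{-1}$ through $g_ns$ need not move away from the basepoint in the positive direction --- in the example it heads straight back past $s$ towards $h^+$ --- so the geodesic from $s$ to $g_nh^{k_n}s$ does not fellow-travel the geodesic from $s$ to $g_ns$ for time comparable to $\d(s,g_ns)$; in fact any $k_n$ large enough to make the first term exceed $n$ forces the middle term to stay bounded, and no balancing of $k_n$ against the rate of $g_ns\to x$ can rescue the chain of inequalities. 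What is true, and what repairs the argument, is that for each $n$ at least \emph{one} of the two endpoints $g_nh^{\pm}$ of the translated quasi-axis satisfies $(g_nh^{\pm},g_ns)_s\ge \d(s,g_ns)-C$, with $C$ depending only on $\delta$ and the quasi-geodesic constants of $k\mapsto h^ks$ (namely the endpoint lying beyond $g_ns$ as seen from the projection of $s$ to the axis); since both endpoints lie in $\mathcal H(G)$, passing to a subsequence and allowing either $g_nh^+$ or $g_nh^-$ gives density. As written, though, your argument proves a false intermediate statement, so this is a genuine error rather than a presentational gap.

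The second half (the ping-pong upgrade) is the standard route and is fine in outline, but two points are asserted rather than proved: the infinitude of $A\cap\mathcal H(G)$ and $B\cap\mathcal H(G)$ does not follow from density plus $|\Lambda(G)|=\infty$ alone --- you must exclude that $x$ is isolated in $\Lambda(G)$, e.g.\ by observing that for independent loxodromics $g,f$ the points $g^nf^+$ are pairwise distinct and converge to $g^+$; and choosing $f,h$ independent is not merely ``avoiding a finite forbidden set,'' since a priori all loxodromics with attracting point in $A$ could share their repelling point with one of $h^{\pm}$ (inside a quasi-parabolic subgroup infinitely many loxodromics do share a common fixed point). Finally, since $S$ is not assumed proper, $\partial S$ need not be compact, so the locally uniform north--south dynamics you invoke to locate $g_N^{\pm}$ for $g_N=f^Nh^N$ requires a quantitative Gromov-product estimate in the spirit of Lemma \ref{Eg} or Lemma \ref{gh}, not just the pointwise convergence stated in the paper.
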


\paragraph{3.2. Elliptic radical.}
Given an action of a group $G$ on a hyperbolic space $S$, we define the \emph{elliptic radical} of $G$ with respect to this action by
\begin{equation}\label{defEG}
E(G)=\{ g\in G\mid gx=x \; \forall\, x\in \Lambda (G)\}.
\end{equation}

Note that this definition makes sense for arbitrary actions on hyperbolic spaces. For non-elementary actions, the following proposition provides an equivalent characterization.

\begin{prop}\label{EG}
Suppose that a group $G$ admits a non-elementary action of general type on a hyperbolic space $S$. Then $E(G)$ is the unique maximal elliptic normal subgroup of $G$.
\end{prop}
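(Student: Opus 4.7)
The plan is to split the proposition into three separate claims: (i) $E(G)$ is normal in $G$; (ii) $E(G)$ is elliptic; and (iii) every normal elliptic subgroup of $G$ is contained in $E(G)$. Uniqueness of the maximal normal elliptic subgroup follows immediately from (iii).

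Normality is formal: the $G$-action extends continuously to $\widehat S$, so $\Lambda(G)$ is $G$-invariant. Hence for $h\in E(G)$, $g\in G$, and $x\in \Lambda(G)$, we have $g^{-1}x\in \Lambda(G)$ and $(ghg^{-1})x = g(g^{-1}x) = x$, so $ghg^{-1}\in E(G)$. For ellipticity I would apply the classification of actions recalled above to the restriction of the action to $E(G)$. Two independent loxodromics in $G$ supply four distinct points $f_1^\pm, f_2^\pm$ in $\Lambda(G)$, and by definition $E(G)$ pointwise fixes all of them, so $|Fix(E(G))|\ge 4$. A glance at the classification shows that each of parabolic, lineal, quasi-parabolic, and general-type actions admits at most two global fixed points on $\partial S$. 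The only remaining possibility is that $E(G)$ acts elliptically.

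The main work is (iii). Let $N\lhd G$ be elliptic, fix a basepoint $s\in S$, and set $D = \sup_{n\in N} \d(ns,s) < \infty$. For any loxodromic $g\in G$, any $n\in N$, and any $k\in\mathbb N$, normality lets me rewrite $n g^k s = g^k (g^{-k} n g^k) s$ with $g^{-k} n g^k \in N$. Hence $\d(ng^k s, g^k s) \le D$ for all $k$. Since $g^k s \to g^+$ by the north-south dynamics, Remark \ref{rem} forces $ng^k s \to g^+$ as well, while continuity of the $G$-action on $\widehat S$ gives $ng^k s \to ng^+$. Therefore $ng^+ = g^+$, and the analogous argument with $g^{-1}$ yields $ng^- = g^-$. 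Thus $N$ pointwise fixes $\mathcal H(G)$, which is dense in $\Lambda(G)$ by Lemma \ref{dense}; continuity of the action of $N$ on $\partial S$ then upgrades this to $N\subseteq E(G)$.

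The step I expect to be the main obstacle is (iii): it relies on simultaneously exploiting normality of $N$ (to swap $n$ past powers of $g$ while staying in $N$) and ellipticity of $N$ (to bound the displacement uniformly in $k$), combined with density of loxodromic fixed points in the limit set. Step (ii) is comparatively easy once the classification of group actions on hyperbolic spaces is granted, and (i) is purely formal.
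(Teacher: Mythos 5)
Your proof is correct and takes essentially the same approach as the paper: normality is formal, ellipticity of $E(G)$ follows from the classification once it fixes more than two boundary points, and the inclusion $N\le E(G)$ comes from normality plus the uniform displacement bound $\d(n g^k s, g^k s)\le D$ combined with Remark \ref{rem}. The only cosmetic difference is that the paper applies this bound to arbitrary sequences $(g_i s)$ converging to a point of $\Lambda(G)$, so it gets $N$ fixing all of $\Lambda(G)$ in one step, whereas you first fix $\mathcal H(G)$ and then invoke density (Lemma \ref{dense}) and continuity.
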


\begin{proof}
Obviously $E(G)$ is normal in $G$. Further, let $N$ be a normal elliptic subgroup of $G$. Fix some $s\in S$. Since $N$ is elliptic, the diameter  $D=\diam (Ns)$ is finite. For every $a\in N$ and every sequence $(g_i)\subseteq G$ such that $\lim\limits_{i\to \infty}g_is=x\in \partial S$ for some $s\in S$, we have
$$
\d (ag_is,g_is)=\d (g^{-1}_iag_is, s)\le D
$$
since $g^{-1}_iag_i\in N$. By Remark \ref{rem}, this implies that $ax=x$ and thus $N\le E(G)$.

Finally we note that by the classification of group actions on hyperbolic spaces, only elliptic subgroups can fix more that $2$ points of $\partial S$. Since the action of $G$ is non-elementary we have $|Fix(E(G))|\ge |\Lambda (G)|=\infty $ and thus $E(G)$ is elliptic.
\end{proof}

\begin{ex}\label{ex1}
In general, the proposition may fail in various ways for elementary (lineal and parabolic) actions. Indeed, for any lineal action, $E(G)$ as defined by (\ref{defEG}) contains loxodromic elements and hence is not elliptic. Furthermore, the maximal normal elliptic subgroup of $G$ may not exist for parabolic actions. Indeed, let
$$
W=\langle a,t\mid a^2=1,\; [a^{t^i}, a] =1,\, \forall i\in \mathbb Z\rangle =\mathbb Z_2 \wr \mathbb Z.
$$
It is easy to see that $W$ is an ascending HNN-extension of the locally finite subgroup $A=\langle a_i, \, i\in \mathbb N\rangle $, where $a_i=a^{t^i}$, associated to a monomorphism $A\to A$ given by $a_i\mapsto a_{i+1}$. Let $T$ be the corresponding Bass-Serre tree and let $G=\langle a_i, \, i\in \mathbb Z\rangle $. Every finitely generated subgroup of $G$ is normal (since $G$ is abelian) and finite (hence elliptic with respect to the action on $T$). However $G$ has an unbounded orbit in $T$ and hence its action is parabolic. Thus the maximal normal elliptic subgroup of $G$ does not exist.
\end{ex}

\section{A dichotomy for IRSs of groups acting on hyperbolic spaces}

Let $G$ be a group acting on a hyperbolic space $(S,\d)$. Throughout this section we assume that $G$ is countable and the action is of general type.

\paragraph{4.1. Recurrent subgroups of groups acting on hyperbolic spaces}
We will need the following result from \cite[Lemma 3.5]{Osi13}.

\begin{lem}\label{gh}
Suppose that for some $a,b\in G$ there exist $x,y\in S$ and $C>0$ such that
\begin{equation}\label{max}
\max\{ \d (x,ax), \d(y,by)\} \le C
\end{equation}
and
\begin{equation}\label{min}
\min \{ \d (x,bx) , \d (y, ay)\} \ge \d (x,y) + 3C.
\end{equation}
Then $ab$ is a loxodromic isometry.
\end{lem}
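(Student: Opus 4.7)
The plan is to exhibit a bi-infinite quasi-geodesic in $S$ preserved by the cyclic subgroup $\langle ab\rangle$, realized as the broken path through the orbit $\{(ab)^n y\}_{n\in \mathbb Z}$ of the base point $y$. Once this quasi-geodesic is in hand, the translation length of $ab$ is bounded below by a positive constant, so $ab$ cannot be elliptic or parabolic and must therefore be loxodromic in the sense described in Section 3.1.

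First, one records that since $ab$ is an isometry, all segments of the broken path have the common length $D := \d(y, aby)$, and the hypotheses immediately give
$$
D \;\ge\; \d(y, ay) - \d(ay, aby) \;=\; \d(y, ay) - \d(y, by) \;\ge\; (\d(x, y) + 3C) - C \;=\; \d(x, y) + 2C \;>\; 0,
$$
where we used $\d(ay, aby) = \d(y, by)$ because $a$ is an isometry. In particular $D$ is bounded below independently of $n$.

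The crux is to show that the broken path is an unparametrized quasi-geodesic with constants depending only on $C$ and on the hyperbolicity constant $\delta$. By the standard local-to-global principle for $\delta$-hyperbolic spaces, it suffices to bound each consecutive Gromov product $(\,(ab)^{n-1}y,\, (ab)^{n+1}y\,)_{(ab)^n y}$ by a constant $K = K(\delta, C)$ independent of $n$. Translating by $(ab)^{-n}$ this reduces to the single inequality
$$
(\,b^{-1}a^{-1}y,\; aby\,)_y \;\le\; K,
$$
which, when expanded using the isometry identity $\d(y, b^{-1}a^{-1} y) = \d(y, aby) = D$, is equivalent to the lower bound $\d(b^{-1}a^{-1} y, aby) \ge 2D - 2K$ — in other words, that the two ``outgoing'' points from $y$ lie approximately on opposite sides of $y$.

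Establishing this lower bound is the main obstacle, and this is where the hypotheses involving $x$ (which have not yet been used) come in through the symmetric roles of $x,y$ and $a,b$. The plan is to route the distance estimate through the auxiliary points $x$ and $abx$, using $\d(x, ax)\le C$ together with $\d(x, bx) \ge \d(x,y) + 3C$ to control the geometry near $y$ after applying $ab$; several applications of Lemma \ref{8d} combined with triangle inequalities propagate the bound, with the gap $3C$ (rather than $C$ or $2C$) providing precisely the slack needed to absorb the hyperbolicity constant. Once this Gromov-product bound is in place, the quasi-geodesic property gives linear growth of $\d(y, (ab)^n y)$ in $|n|$, and the proof is complete.
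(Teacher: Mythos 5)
You should first note that the paper itself contains no proof of Lemma \ref{gh}: it is quoted verbatim from \cite[Lemma 3.5]{Osi13}, so the only question is whether your argument is complete and correct on its own terms. It is not: it is an outline whose decisive step is deferred. After correctly recording $D=\d(y,aby)\ge \d(x,y)+2C$, everything hinges on the uniform bound for the breakpoint Gromov product $(b^{-1}a^{-1}y,\, aby)_y\le K$, equivalently $\d(b^{-1}a^{-1}y, aby)\ge 2D-2K$, and this is precisely where the hypotheses involving $x$ (and the gap $3C$) would have to be used. You explicitly call this ``the main obstacle'' and then only describe a plan (``route the estimate through $x$ and $abx$, several applications of Lemma \ref{8d} and triangle inequalities''), so the core of the proof is simply missing.

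Moreover, the reduction you propose is not valid as stated. The local-to-global criterion for broken paths in a $\delta$-hyperbolic space requires the segment length to dominate both the breakpoint Gromov products and $\delta$ (an inequality of the shape $D\ge 2K+\mathrm{const}\cdot\delta$); bounding the Gromov products by some $K(\delta,C)$ is of no use when $D$ itself may be small compared with $\delta$. The hypotheses (\ref{max}), (\ref{min}) give no comparison of $C$ or $\d(x,y)$ with $\delta$, so your claim that ``the gap $3C$ provides precisely the slack needed to absorb the hyperbolicity constant'' has no basis. The scale issue is genuine and not an artifact of your method: let $S=S^1\times T$ with the $\ell^1$-metric, where $S^1$ is a circle of circumference $4$ and $T$ is the Cayley graph of $F_2$, and let $G=\langle \rho_{x_0},\rho_{y_0}\rangle\times F_2$, where $\rho_{x_0},\rho_{y_0}$ are the reflections of $S^1$ through points $x_0,y_0$ with $\d_{S^1}(x_0,y_0)=0.9$. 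Then $S$ is a geodesic quasi-tree (hence hyperbolic), the action of $G$ is of general type, and for $a=(\rho_{x_0},1)$, $b=(\rho_{y_0},1)$, $x=(x_0,v)$, $y=(y_0,v)$, $C=0.29$ the inequalities (\ref{max}) and (\ref{min}) hold, while $ab$ acts as a rotation of the circle factor and is elliptic. So at scales below $\delta$ no argument of the kind you sketch can close; any correct proof must work at the scale of $\delta$, i.e.\ in effect one needs (\ref{min}) strengthened by a term comparable with $\delta$ (which costs nothing in the only application, Lemma \ref{com}, where $\d(x,y)\to\infty$ while $C$ is fixed). As it stands, your proposal has both a missing central estimate and an unjustified appeal to the local-to-global principle; either supply the chain estimate in the regime where the segments are long compared with $\delta$ and with $C$, or simply cite \cite[Lemma 3.5]{Osi13} as the paper does.
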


Recall that for a point $a\in \partial S$, $G_a$ denotes the stabilizer of $a$.

\begin{lem}\label{Eg}
Suppose that a group $G$ acts on a hyperbolic space $S$. Let us fix some $s\in S$ and let $g\in G$ be a loxodromic element. Then for every $f\in G\setminus G_{g^+}$, there exists a constant $D$ such that
\begin{equation}\label{fgn1}
|\d (fg^ns, g^ns)-2\d (g^ns,s)|\le D
\end{equation}
for all $n\in \mathbb N$.
\end{lem}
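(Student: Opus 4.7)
The plan is to exploit the north-south dynamics of the loxodromic element $g$ together with the assumption $fg^+\ne g^+$. Since $g$ is loxodromic we have $g^n s\to g^+$ in $\widehat S$ as $n\to +\infty$, and by continuity of the $G$-action on $\widehat S$ we also have $fg^n s\to fg^+$. The hypothesis $f\notin G_{g^+}$ guarantees that these two boundary limits are distinct points of $\partial S$.

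The main geometric step will be to establish a uniform bound
$$
(g^n s,\, fg^n s)_s\le C\quad\text{for all } n\in \mathbb N,
$$
which expresses the intuition that two sequences converging to distinct boundary points cannot accumulate large Gromov products at a fixed basepoint. I would argue by contradiction: if some subsequence satisfied $(g^{n_k} s, fg^{n_k} s)_s\to\infty$, then because $(fg^n s)$ converges at infinity we also have $(fg^{n_k} s, fg^{n_l} s)_s\to\infty$ as $k,l\to\infty$, and applying the $8\delta$-inequality from Lemma \ref{8d} to the triple $g^{n_k}s,\, fg^{n_k}s,\, fg^{n_l}s$ yields $(g^{n_k} s, fg^{n_l} s)_s\to\infty$ as $k,l\to\infty$ independently. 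By the definition of equivalence of sequences at infinity recalled in Section 3.1, this forces $g^+=fg^+$, contradicting $f\notin G_{g^+}$. This is the one real obstacle in the argument.

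Once the uniform bound $(g^n s, fg^n s)_s\le C$ is in hand, the estimate (\ref{fgn1}) is just bookkeeping with the definition of the Gromov product. From
$$
\d(fg^n s, g^n s)=\d(g^n s,s)+\d(fg^n s,s)-2(g^n s, fg^n s)_s
$$
and the triangle inequality $|\d(fg^n s,s)-\d(g^n s,s)|\le \d(f^{-1}s,s)$ (which uses only that $f$ is an isometry, and is a constant independent of $n$), we obtain (\ref{fgn1}) with, for example, $D=2C+\d(f^{-1}s,s)$.
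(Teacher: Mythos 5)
Your proof is correct and follows essentially the same route as the paper: deduce from $f\notin G_{g^+}$ that $(g^ns)$ and $(fg^ns)$ converge to distinct boundary points, conclude that the Gromov products $(g^ns,fg^ns)_s$ are uniformly bounded by some $C$, and then obtain (\ref{fgn1}) with $D=2C+\d(fs,s)$ by triangle-inequality bookkeeping. The only difference is that you spell out (via Lemma \ref{8d}) the standard fact that sequences with distinct boundary limits have bounded Gromov products, which the paper simply asserts.
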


\begin{proof}
Since $f\notin G_{g^+}$, the sequences $(g^ns)$ and $(fg^ns)$ converge to distinct points of $\partial S$. It follows that the Gromov products $(fg^ns, g^ns)_s$ are uniformly bounded by some constant $C$. Therefore, we obtain
\begin{equation}\label{fgn2}
\begin{array}{rcl}
\d (fg^ns, g^ns) & = & \d (fg^ns, s)+\d(g^ns,s) - 2(fg^ns, g^ns)_s \ge \\&&\\
&& \d(fg^ns,fs) - \d (fs,s) +\d(g^ns,s) - 2C= \\&&\\
&&2 \d(g^ns,s) -\d(fs,s) -2C.
\end{array}
\end{equation}

On the other hand, we obviously have
\begin{equation}\label{fgn3}
\d (fg^ns, g^ns) \le \d (fg^ns, fs)+\d (fs,s) +\d(s,g^ns) \le  2 \d(g^ns,s) +\d(fs,s).
\end{equation}
Combining (\ref{fgn2}) and (\ref{fgn3}), we obtain (\ref{fgn1}) for $D=\d(fs,s) +2C$.
\end{proof}

\begin{lem}\label{com}
Let $g\in \L (G)$ and let $f\in G$ be an element such that $fg^+ \ne g^+$ and $fg^- \ne g^-$. Then for all sufficiently large $n\in \mathbb N$, the commutator $[f, g^n]=fg^nf^{-1}g^{-n}$ is loxodromic.
\end{lem}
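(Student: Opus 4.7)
The plan is to write the commutator as $ab = f \cdot (g^n f^{-1} g^{-n})$ with $a = f$ and $b = g^n f^{-1} g^{-n}$, and then apply Lemma \ref{gh} to this pair. The natural choice of reference points will be $x = s$ and $y = g^n s$, where $s \in S$ is any fixed basepoint. The heuristic is that $a$ (having bounded translation length on $s$) approximately fixes $x$, while $b$, being a conjugate of $f^{-1}$ by $g^{-n}$, approximately fixes $y = g^n s$; meanwhile the \emph{opposite} pairings $b \cdot x$ and $a \cdot y$ should have distances comparable to $2\d(s, g^n s)$ because the hypothesis $fg^{\pm} \ne g^{\pm}$ forces the relevant Gromov products based at $s$ to stay bounded, which is precisely the content of Lemma \ref{Eg}.

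Carrying this out: since $a = f$, I have $\d(x, ax) = \d(s, fs)$, which is some constant independent of $n$. For $b$, I compute $by = g^n f^{-1} g^{-n} \cdot g^n s = g^n f^{-1} s$, so $\d(y, by) = \d(g^n s, g^n f^{-1} s) = \d(s, f^{-1}s) = \d(fs,s)$ by isometry. Hence I may take $C := \d(fs, s)$ as a common upper bound; in particular $\max\{\d(x,ax), \d(y,by)\} \le C$, uniformly in $n$.

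Next I estimate the two ``cross'' distances. Lemma \ref{Eg}, applied directly to $g$ and $f$ (using the hypothesis $fg^+ \ne g^+$, i.e., $f \notin G_{g^+}$), yields
$$
\d(y, ay) = \d(g^n s, f g^n s) \ge 2\d(g^n s, s) - D_1
$$
for some constant $D_1$ and all $n$. Applying Lemma \ref{Eg} instead to the loxodromic element $g^{-1}$ (whose attracting fixed point is $g^-$) together with the element $f^{-1}$ (which does not fix $g^-$ since $fg^- \ne g^-$) gives
$$
\d(x, bx) = \d(s, g^n f^{-1} g^{-n} s) = \d(g^{-n}s, f^{-1} g^{-n} s) \ge 2\d(g^{-n} s, s) - D_2 = 2\d(s, g^n s) - D_2.
$$
Since $\d(x, y) = \d(s, g^n s)$, these two estimates give
$$
\min\{\d(x, bx), \d(y, ay)\} \ge 2\d(s, g^n s) - \max\{D_1, D_2\} = \d(x,y) + \bigl(\d(s, g^n s) - \max\{D_1, D_2\}\bigr).
$$

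Because $g$ is loxodromic, $\d(s, g^n s) \to \infty$ as $n \to \infty$, so for all sufficiently large $n$ the right-hand side is at least $\d(x,y) + 3C$. Lemma \ref{gh} then immediately implies that $ab = [f, g^n]$ is loxodromic, which is exactly the claim. The main conceptual step is recognizing the right split of the commutator as $f \cdot (g^n f^{-1} g^{-n})$ with the matched basepoints $(s, g^n s)$, so that both $a$ and $b$ are ``nearly elliptic'' at their own reference point while sending the other reference point a distance comparable to twice the $g^n$-translation; once this is in place, the two invocations of Lemma \ref{Eg} do all the quantitative work with no further hyperbolicity computations needed.
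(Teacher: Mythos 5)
Your proof is correct and follows essentially the same route as the paper: the same decomposition $a=f$, $b=g^nf^{-1}g^{-n}$ with reference points $s$ and $g^ns$, verification of (\ref{max}) by isometry, and two applications of Lemma \ref{Eg} (to $g,f$ and to $g^{-1},f^{-1}$) to get (\ref{min}) for large $n$. You merely spell out explicitly the second application, which the paper dismisses with ``similarly.''
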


\begin{proof}
Let us fix some $x\in S$ and let $C=\d (fx,x)$. Let also $y=g^nx$. By Lemma \ref{gh}, it suffices to show that the inequalities (\ref{max}) and (\ref{min}) are satisfied for the elements $a=f$ and $b=g^nf^{-1}g^{-n}$.

Note that $\d (y, by)=\d (g^nx, g^nf^{-1}x)=\d(x,f^{-1}x)=C$ and thus the inequality (\ref{max}) holds. Since $fg^+\ne g^+$, we can apply Lemma \ref{Eg}, which tells us that  $\d (y, ay)=\d (g^nx, fg^nx) $ grows at least as $2\d (x,y)$ minus a constant as $n\to \infty$ (note the multiple $2$ here). Since $g$ is loxodromic, we have $\d (x, y)\to \infty$ as $n\to \infty$. In particular, if $n$ is  sufficiently large, the inequality $\d (y, ay)\ge \d (x,y) + 3C$ holds. Similarly we show that $\d (x,bx)\ge \d (x,y) + 3C$ whenever $n$ is  sufficiently large; we need the assumption $fg^{-}\ne g^{-}$ here.
\end{proof}

\begin{lem}\label{loxo}
Let $H\le G$ be a recurrent subgroup. If $H$ acts non-trivially on $\Lambda (G)$, then $H$ contains a loxodromic element.
\end{lem}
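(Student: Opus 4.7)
\textbf{Proof plan for Lemma \ref{loxo}.}

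The strategy is to exhibit a loxodromic element of $H$ in the form $[h, g^n]$, where $h \in H$ witnesses the non-triviality of the action of $H$ on $\Lambda(G)$ and $g \in \L(G)$ is chosen so that its attracting and repelling fixed points are both moved by $h$. Lemma \ref{com} will then guarantee that $[h, g^n]$ is loxodromic for all sufficiently large $n$, while the recurrence of $H$ (Corollary \ref{cr}) will force this commutator back into $H$ for infinitely many $n$; the two infinite sets must overlap.

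So fix $h \in H$ and $x \in \Lambda(G)$ with $hx \ne x$, and set $y = hx$. Then $y \in \Lambda(G)$ by $G$-invariance of the limit set, $y \ne x$, and in fact $hy \ne y$: the equality $hy = y = hx$ would imply $hx = x$ after applying $h^{-1}$, a contradiction. Hence both $x$ and $y$ lie in $\Lambda(G) \setminus \mathrm{Fix}_{\partial S}(h)$. Since the action on $\partial S$ is continuous and $\partial S$ is Hausdorff, I can pick disjoint open neighborhoods $A \ni x$, $B \ni y$ in $\partial S$ with $A \cap hA = \emptyset$ and $B \cap hB = \emptyset$. Lemma \ref{dense} then furnishes $g \in \L(G)$ with $g^+ \in A$ and $g^- \in B$, so that $hg^+ \ne g^+$ and $hg^- \ne g^-$, and Lemma \ref{com} applies to tell us that $[h, g^n]$ is loxodromic for all sufficiently large $n$.

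To produce a large $n$ for which $[h, g^n]$ actually lies in $H$, I apply Corollary \ref{cr} to the element $h^{-1} \in H$ and the element $g^{-1} \in G$: there are infinitely many $n \in \mathbb N$ with $(h^{-1})^{(g^{-1})^n} = g^n h^{-1} g^{-n} \in H$. For any such $n$,
\[
[h, g^n] \;=\; h \cdot \bigl(g^n h^{-1} g^{-n}\bigr) \;\in\; H.
\]
Intersecting this infinite set of $n$ with the cofinite set supplied by Lemma \ref{com} yields a loxodromic element of $H$, finishing the proof. The only step that requires a moment's thought is the initial observation that $h$ must move at least two distinct points of $\Lambda(G)$, so that Lemma \ref{dense} can be invoked with two disjoint target neighborhoods; this is the one obstacle, and it is handled by the short $h^2 x = hy$ argument above. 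Everything else is a direct assembly of the ingredients already available.
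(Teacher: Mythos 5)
Your proof is correct and follows essentially the same route as the paper: choose a loxodromic $g$ via Lemma \ref{dense} so that $h$ moves both $g^+$ and $g^-$, apply Lemma \ref{com} to make $[h,g^n]$ loxodromic, and use recurrence (Corollary \ref{cr}) to place $g^nh^{-1}g^{-n}$, hence $[h,g^n]$, in $H$. The only cosmetic difference is in the choice of neighborhoods (the paper puts both $g^{\pm}$ in a single neighborhood $U$ of $x$ with $hU$ disjoint from $U$, while you separate $g^+$ near $x$ and $g^-$ near $hx$), which changes nothing of substance.
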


\begin{proof}
Assume that some $f\in H$ acts nontrivially on $\Lambda (G)$. Let $x\in \Lambda (G)$ be a point such that $fx\ne x$. Let $y=fx$. We fix two disjoint open neighborhoods $U$ and $V$ of $x$ and $y$, respectively, such that $fU\subseteq V$. By Lemma \ref{dense}, we can find a loxodromic element $g\in G$ such that $g^\pm \subseteq U$. Then $fg^{\pm}\subseteq fU\subseteq V$. In particular, we have $fg^\pm \ne g^\pm $. By Lemma \ref{com}, the commutator $[f,g^n]$ is loxodromic for all sufficiently large $n\in \mathbb N$. Since $H$ is recurrent, we conclude that there exist arbitrarily large $n$ such that $g^{n}f^{-1}g^{-n}\in H$ and hence $[f,g^n]\in H$. \end{proof}

We are now ready to prove the main result of this section.

\begin{thm}\label{main1}
Suppose that a countable group $G$ admits a general type action on a hyperbolic space $S$. Then every  recurrent subgroup $H$ of $G$ is either geometrically dense in $G$ with respect to this action or belongs to $E(G)$. In particular, in the former case the action of $H$ on $S$ is of general type and in the latter case $H$ is elliptic.
\end{thm}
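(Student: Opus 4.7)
The plan is to split on whether $H$ fixes $\Lambda(G)$ pointwise. If it does, then by definition (\ref{defEG}) we immediately have $H \le E(G)$, which is elliptic by Proposition \ref{EG}. Assume henceforth that $H$ acts nontrivially on $\Lambda(G)$; the task is to prove $H$ is geometrically dense in $G$. Lemma \ref{loxo} already supplies a loxodromic element $h \in H$ with fixed points $h^{\pm}\in\partial S$, and the remainder of the argument consists of bootstrapping from this one element.

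First I would promote $h$ to a second loxodromic element of $H$ independent of $h$. Using that the action of $G$ is of general type (so $\mathcal{L}(G)$ contains infinitely many pairwise independent elements), pick $g \in \mathcal{L}(G)$ with $\{g^+, g^-\} \cap \{h^+, h^-\} = \emptyset$. Applying Lemma \ref{lr} with $\mathcal F = \{h\}$ and conjugator $g^{-1}$, there are infinitely many $n \in \mathbb N$ such that $g^n h g^{-n} \in H$. Each such element is loxodromic with fixed points $g^n h^{\pm}$, and the north-south dynamics of $g$ (together with $h^{\pm} \ne g^-$) force $g^n h^{\pm} \to g^+$ as $n \to \infty$. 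So for large $n$ these fixed points avoid $\{h^+, h^-\}$, producing an element of $\mathcal{L}(H)$ independent of $h$. A group containing two independent loxodromics can have no boundary fixed point (any such point would have to lie in both disjoint pairs of loxodromic fixed points), so $Fix(H) = \emptyset$, and by the classification the action of $H$ is automatically of general type.

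It remains to show $\Lambda(H) = \Lambda(G)$; the inclusion $\subseteq$ is obvious. For the reverse, fix $x \in \Lambda(G)\setminus\{h^+,h^-\}$ (points of $\{h^+,h^-\}\subseteq \Lambda(H)$ need no further argument) and let $U$ be an arbitrary open neighborhood of $x$ whose closure avoids $\{h^+, h^-\}$. By Lemma \ref{dense}, pick $f \in \mathcal{L}(G)$ with $f^+ \in U$ and $f^-\notin\{h^+,h^-\}$ (possible since $\Lambda(G)$ is infinite). A second application of Lemma \ref{lr} (with $\mathcal F = \{h\}$ and conjugator $f^{-1}$) yields infinitely many $n$ with $f^n h f^{-n} \in H$; north-south dynamics drag the fixed points $f^n h^{\pm}$ into $U$ for large $n$, so $\Lambda(H) \cap U \ne \emptyset$. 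Since $U$ was arbitrary and $\Lambda(H)$ is closed in $\partial S$, this gives $x \in \Lambda(H)$.

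The main obstacle is the density step: upgrading a single loxodromic in $H$ to a subgroup whose limit set fills $\Lambda(G)$. The key is that the recurrence property makes conjugates of a loxodromic $h\in H$ by any $f\in G$ return to $H$ infinitely often, while the north-south dynamics of $f$ let us push the fixed points of those conjugates arbitrarily close to $f^+$; choosing $f$ via Lemma \ref{dense} so that $f^+$ lies in the target neighborhood while $f^-$ stays away from $\{h^+,h^-\}$ is exactly what the general-type hypothesis provides.
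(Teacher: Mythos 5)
Your proposal is correct, and it follows the paper's overall skeleton (split on whether $H$ acts trivially on $\Lambda(G)$, invoke Lemma \ref{loxo} to get a loxodromic $h\in H$, then exploit recurrence via conjugates $g^nhg^{-n}\in H$), but it handles the key density step by a genuinely different, softer mechanism. The paper proves $\Lambda(H)\supseteq\Lambda(G)$ by a metric computation: using Lemma \ref{Eg} it estimates Gromov products to show that the \emph{orbit points} $h^{g^n}s$ of the returning conjugates converge to a fixed point of $g$, and it only uses the ``conjugate of $h$ is loxodromic with fixed points $g^nh^{\pm}$ pushed by north--south dynamics'' trick in the final no-fixed-point step. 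You instead run that one trick uniformly for both halves: the returning conjugates $f^nhf^{-n}\in H$ are loxodromic with fixed points $f^nh^{\pm}\to f^{+}\in U$ (legitimate, since the paper's north--south statement applies to all points of $\widehat S\setminus\{f^-\}$, including boundary points, and $f^-\notin\{h^+,h^-\}$ by your choice via Lemma \ref{dense}), and fixed points of loxodromic elements of $H$ lie in $\Lambda(H)$ because $\Lambda(\langle k\rangle)=\{k^{\pm}\}\subseteq\Lambda(H)$. This buys a shorter argument that avoids the estimate (\ref{gns}) entirely (Lemma \ref{Eg} then only enters through Lemma \ref{com} inside Lemma \ref{loxo}) and lands the produced limit point directly in the target neighborhood; the paper's computation, in exchange, works purely with orbit points and Gromov products and so leans less on the boundary dynamics. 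Two cosmetic remarks: the requirement that the closure of $U$ avoid $\{h^+,h^-\}$ is never used and can simply be dropped, and your existence claim for a loxodromic $g$ independent of $h$ is fine but is most cleanly justified either by Lemma \ref{dense} or by noting that among three pairwise independent loxodromics of $G$ at least one misses $\{h^+,h^-\}$.
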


\begin{proof}
If $H$ acts trivially on $\Lambda (G)$, then $H\le E(G)$. Thus we can assume that the action of $H$ on $\Lambda (G)$ is nontrivial. We first prove that $\Lambda (H)=\Lambda (G)$ in this case.

Since limit sets are closed, it suffices to show that every open neighborhood $U\subseteq \widehat S$ of every $x\in \Lambda (G)$ contains an element of $\Lambda (H)$. By Lemma \ref{loxo}, there exists loxodromic $h\in H$. If $h^+ \in U$ or $h^-\in U$, we are done. Thus we can assume that $\{ h^+, h^-\}\cap U=\emptyset$. By Lemma \ref{dense}, there exists a loxodromic element $g\in G$ such that $g^+\in U$. In particular, we have $h\notin G_{g^+}$. Let us fix some $s\in S$. We claim that
\begin{equation}\label{gnhgn}
\lim_{n\to\infty} h^{g^{n}}s= g^- .
\end{equation}
Indeed using Lemma \ref{Eg} we obtain
\begin{equation}\label{gns}
\begin{array}{rcl}
2(h^{g^{n}}s, g^{-n}s)_s &= &\d(g^{-n}hg^{n}s,s) + \d(g^{-n}s,s) -\d (g^{-n}hg^{n}s, g^{-n}s)=\\&&\\
&& \d(hg^{n}s,g^{n}s) + \d(s,g^{n}s) -\d (g^{n}s, h^{-1}s)\ge \\&&\\
&& 3\d (s, g^ns) -D - (\d (g^ns, s) +\d (s,h^{-1}s))    = \\&&\\
&& 2 \d (s, g^ns) -D - \d (hs,s) \to \infty
\end{array}
\end{equation}
as $n\to \infty$.
Since the sequence $(g^{-n}s)$ is convergent, we also have
\begin{equation}\label{gmn}
(g^{-n}s, g^{-m}s)_s \to \infty
\end{equation}
as $m,n\to \infty$. Combining (\ref{gns}), (\ref{gmn}), and Lemma \ref{8d}, we obtain that $(h^{g^{n}}s)$ converges to infinity and is equivalent to $(g^{-n}s)$. Hence we obtain (\ref{gnhgn}). It remains to note that by Corollary \ref{cr}, $(h^{g^{n}})$ has an infinite subsequence consisting of elements of $H$. Hence $g^-\in \Lambda(H)$ and thus $U\cap \Lambda(H)$ is non-empty.

Finally we note that $H$ cannot fix a point $a\in\partial S$. Indeed since the action of $G$ is of general type, we can find a loxodromic element $g\in G$ such that $g^{\pm}\notin \{a, h^+, h^-\}$. Using the north-south dynamics of the action of $g$, we conclude that $g^nh^{\pm}\ne a$ for sufficiently large $n$. On the other hand, the recurrency condition implies that there are arbitrarily large $n$ such that $g^nhg^{-n}\in H$. It is clear that $g^nhg^{-n}$ is loxodromic and its fixed points are exactly $g^nh^{\pm}$. Thus $H$ contains an element that does not fix $a$.
\end{proof}

\begin{proof}[Proof of Theorem \ref{main}]
The theorem follows from Proposition \ref{rec} and Theorem \ref{main1} immediately.
\end{proof}

\paragraph{4.2. An application to acylindrically hyperbolic groups.}
Recall that an isometric action of a group $G$ on a metric space $(S,\d)$ is {\it acylindrical} if for every $\e>0$ there exist $R,N>0$
such that for every two points $x,y$ with $\d (x,y)\ge R$, there are at most $N$ elements $g\in G$ satisfying
$$
\d(x,gx)\le \e \;\;\; {\rm and}\;\;\; \d(y,gy) \le \e.
$$
The notion of acylindricity goes back to Sela's paper \cite{Sel}, where it was studied for groups acting on trees. In the context of general metric spaces, this concept is due to Bowditch \cite{Bow}. Further, a group $G$ is called \emph{acylindrically hyperbolic} if it admits a non-elementary acylindrical action on a hyperbolic space. For details and recent developments in the study of acylindrically hyperbolic groups we refer to \cite{Osi13}.

\begin{proof}[Proof of Corollary \ref{cor1}]
Let $G$ be a group acting non-elementarily and acylindrically on a hyperbolic space $S$. By \cite[Lemma 6.15]{DGO} and part (a) of Proposition \ref{EG}, $E(G)=K(G)$ is finite in this case. Thus for every IRG $\mu$ of $G$, $\mu$-almost every subgroup of $G$ is geometrically dense with respect to the action on $S$ or belongs to $K(G)$. It is clear that the set of subgroups of $K(G)$ is $G$-invariant, and thus if $\mu$ is ergodic it must have measure $0$ or $1$. This implies the first claim of the corollary. The second part follows immediately from the fact that the action of a geometrically dense subgroup is always non-elementary and the fact that $K(G)$ is finite.
\end{proof}

Corollary \ref{cor2} is an obvious reformulation of Corollary \ref{cor1} (see Example \ref{ex}).

\vspace{1cm}

\noindent \textbf{Denis Osin: } Department of Mathematics, Vanderbilt University, Nashville 37240, U.S.A.\\
E-mail: \emph{denis.v.osin@vanderbilt.edu}


\begin{thebibliography}{99}

\bibitem{7s}
M. Abert, N. Bergeron, I. Biringer, T. Gelander, N. Nikolov, J. Raimbault, I. Samet, On the growth of $L^2$-invariants for sequences of lattices in Lie groups, \emph{arXiv:1210.2961v3.}


\bibitem{AGV}
M. Abert, Y. Glasner, B. Virag, Kesten's theorem for Invariant Random Subgroups, \emph{Duke Math. J.} \textbf{163}, no. 3 (2014), 465-488.

\bibitem{AM}
L. Auslander, C. C. Moore, Unitary representations of solvable Lie groups, \emph{Memoirs AMS} (1966), 66-77.


\bibitem{Bow}
B. Bowditch, Tight geodesics in the curve complex, \emph{Invent. Math.} \textbf{171} (2008), no. 2, 281-300.

\bibitem{BGK}
L. Bowen, R. Grigorchuk, R. Kravchenko,
Characteristic random subgroups of geometric groups and free abelian groups of infinite rank, arXiv:1402.3705.

\bibitem{CCMT}
P-E. Caprace, Y. Cornulier, N. Monod, R. Tessera, Amenable hyperbolic groups,  \emph{arXiv:1202.3585.}

\bibitem{DGLL}
B. Duchesne, Y. Glasner, N. Lazarovich, J. L\'ecureux, Geometric density for invariant random subgroups of groups acting on $CAT(0)$ spaces, arXiv:1409.8007.

\bibitem{DGO}
F. Dahmani, V. Guirardel, D. Osin, Hyperbolically embedded subgroups and rotating families in groups acting on hyperbolic spaces,  \emph{ arXiv:1111.7048}; \emph{Memoirs AMS}, to appear.

\bibitem{GG}
Y. Glasner, Invariant random subgroups of linear groups (with an appendix by Y. Glasner and T. Gelander), arXiv:1407.2872.

\bibitem{G}
T. Gelander, A lecture on Invariant Random Subgroups, \emph{arXiv:1503.08402}.

\bibitem{GH}
E. Ghys, P. de la Harpe, Sur les groupes hyperboliques d'apr\`es Mikhael Gromov. Progress in Mathematics, \textbf{83}. Birkh\"auser Boston, Inc., Boston, MA, 1990.


\bibitem{Gro}
M. Gromov, {\it Hyperbolic groups,} Essays in Group Theory, MSRI
Series, Vol.8, (S.M. Gersten, ed.), Springer, 1987, 75-263.

\bibitem{GH}
E. Ghys, P. de la Harpe, Sur les groupes hyperboliques d'apr\`es Mikhael Gromov. Progress in Mathematics, \textbf{83}. Birkh\"auser Boston, Inc., Boston, MA, 1990.

\bibitem{H}
M. Hamann, Group actions on metric spaces: fixed points and free subgroups, arXiv:1301.6513.

\bibitem{HO}
M. Hull, D. Osin, Transitivity degrees of countable groups and acylindrical hyperbolicity,  arXiv:1501.04182.

\bibitem{MO}
A. Minasyan, D. Osin, Acylindrical hyperbolicity of groups acting on trees, \emph{Math. Ann.} \textbf{362} (2015), no. 3-4, 1055-1105.

\bibitem{Osi15}
D. Osin, On acylindrical hyperbolicity of groups with positive first $\ell^2$-Betti number, \emph{Bull. Lond. Math. Soc.} \textbf{47} (2015), no. 5, 725-730.

\bibitem{Osi13}
D. Osin, Acylindrically hyperbolic groups, \emph{Trans. AMS}, to appear; \emph{arXiv:1304.1246}.

\bibitem{Sel}
Z. Sela, Acylindrical accessibility for groups,
\emph{Invent. Math.} \textbf{129} (1997), no. 3, 527-565.
\end{thebibliography}
\end{document}